\def\Rset{\mathbb{R}}
\theoremstyle{plain}
\newtheorem{thm}{Theorem}[section]
\newtheorem{cor}[thm]{Corollary}
\theoremstyle{definition}
\newtheorem{defn}[thm]{Definition}
\newtheorem{rem}[thm]{Remark}
\newtheorem{exmp}[thm]{Example}
\def\B{\mathfrak{B}}
\title{\bf About Conformable Derivatives in Banach Spaces}
\author{Hristo Kiskinov $^{1}$, Milena Petkova $^{2}$,\\ Andrey Zahariev $^{3}$, Magdalena Veselinova$^{4}$\vspace{0.2cm}\\ 
Faculty of Mathematics and Informatics, University of Plovdiv, \\Plovdiv 4000, Bulgaria\vspace{0.2cm}\\
$^{1}$  kiskinov@uni-plovdiv.bg;
$^{2}$  milenapetkova@uni-plovdiv.bg;\\
$^{3}$  zandrey@uni-plovdiv.bg;\\
$^{4}$  m.veselinova@fmi-plovdiv.org
}
\date{}
\begin{document}
\maketitle
\begin{abstract}
	\small 
	In the paper we discuss conformable derivative behavior in arbitrary Banach spaces
	and clear the connection between two conformable derivatives of different order. 
	As a consequence we obtain the important result that an abstract function has a conformable derivative at a point 
	(which does not coincide with the lower terminal of the conformable derivative) 
	if and only if it has a first order derivative at the same point.  
	
\textbf{Keywords:}
conformable derivative, Banach space

{\bf 2010 MSC:} 26A33, 26A24   
\end{abstract}

%
%

\section{Introduction} 
\label{sec:introduction}

In 2014, Khalil, Al Horani, Yousef and Sababheh \cite{KHYS14} 
introduced a definition of a local kind derivative called from the authors conformable fractional derivative. 
The reason for its introduction was that this derivative satisfies a lot of the well-known properties of the integer order derivatives. 
In 2015, Abdeljawad \cite{Abd15} made an extensive research of the newly introduced conformable fractional calculus. 
In \cite{Mar18} Martynyuk presented a physical interpretation of the conformable derivative.
In the last years there are published more than hundred research articles using this derivative - 
see for example 
\cite{AAAJO18}--\cite{AA17}, \cite{BLNS15}--\cite{Chu15},\cite{ER15}--\cite{IN16},\cite{LT17},\cite{MWR19},\cite{MER20},\cite{PP16},\cite{TN16}, 
\cite{ZFW15}--\cite{ZYZ18}  
and the references therein.

In \cite{AU15} Anderson and Ulness made a remark, that since the derivative is local, 
the correct name must be "conformable derivative" instead of "conformable fractional derivative".
See also \cite{OM15}, \cite{OM17}, \cite{Tar18}.
Maybe this is the good reason why the authors of \cite{Mar18}--\cite{MSS19-2}  
use the name "fractional-like" instead "conformable" derivative.

In \cite{KHYS14} and \cite{Abd15} was presented a simple relation between the conformable and the integer order derivative
but only if the last one exists.
But 
in \cite{A18}--\cite{AM19},\cite{kpzarxiv19} and \cite{kpzaip19} was shown, 
that the condition the integer order derivative to exist is unnecessary
and there was obtained that a function has a conformable derivative at a point if and only if 
it has a first order derivative at the same point and that holds for all points except the lower terminal.
Some considerations what happens in the lower terminal are given in \cite{kpzarxiv19} and \cite{kpzaip19} too.
In \cite{kpzarxiv19}  is considered also
an initial value problem (IVP) for nonlinear differential equation with conformable derivative  
and is presented a scheme, how to transform  this IVP to an equivalent IVP for equation with integer (first) order derivative
even in the case when the lower terminal of the conformable derivative coincides with the initial point.
In \cite{kpzaip19} an initial problem (IP) for nonlinear differential system with conformable derivatives and variable delays
is also transformed to an equivalent initial problem for integer (first) order delayed system, 
and then to well studied system of Volterra integral equations.

All the mentioned above studies were for real valued functions. 
And soon it was clear for the most mathematicians, 
that all such problems with conformable derivatives can be easelly reduced to equivalent problems with integer order derivatives. 

But what will happen for functions with values in arbitrary Banach spaces?
Will the simple relation between the conformable derivative and the integer order derivative still hold?
There are not a lot of studies for conformable derivatives in Banach spaces. 
We can refer only to \cite{JB19}, 
where using definitions and properties given only for real valued functions somehow are obtained results for Banach spaces. 
A doubt, that the mentioned simple relation between the conformable derivative and the integer order derivative 
does not hold for functions with values in Banach spaces is presented in \cite{TNBO20}.

That is why the goal in our article is 
to study the conformable derivative behavior for functions with values in arbitrary Banach spaces
and to give clear answer of the question about the connection between the conformable derivative and the integer order derivative in this case. 
Our answer is - yes, the same simple relation holds for functions in Banach spaces too.

The paper is organized as follows: 
In Section~2 
we give some needed definitions and properties 
for the conformable integral and conformable derivative in Banach spaces. 
In Section~3 
we discuss the conformable derivative behavior in arbitrary Banach spaces
and clear the connection between two conformable derivatives of different order. 
As a consequence we obtain the important result that an abstract function has a conformable derivative at a point 
(which does not coincide with the lower terminal of the conformable derivative) 
if and only if it has a first order derivative at the same point.  
Section~4 
is devoted to our comments to the obtained results.

%
%

\section{Preliminaries} 
\label{sec:preliminaries}

For convenience and to avoid possible misunderstandings, below we  
will present 
the definitions of the conformable integral and the conformable derivative 
for functions with values in arbitrary Banach space 
as well as some needed their properties. 
For details and more properties (for real valued functions), we refer to \cite{Abd15}, \cite{kpzarxiv19}, \cite{kpzaip19}. 
The basic definitions and theorems concerning Bochner integral and Bochner spaces necessary for this exposition are given too. 
Some definitions and preliminary results for week derivative in sense of distributions (generalized functions)
needed for our last section are also presented.

Below we will use the notations  
${\Rset_+ = (0,\infty)}$ and ${\overline{\Rset}_+ = [0,\infty)}$.

Let $\B$ be an arbitrary Banach space with norm ${||.||}_\B$ and
denote by 
$L(\Rset,\B)$ the linear space of all abstract functions $f:\Rset \to \B$ 
which are Bochner strongly measurable on every compact subinterval $J \subset \Rset$
and denote by
${L_1^{loc}(\Rset,\B)}$  the linear space of all abstract functions $f \in L(\Rset,\B)$
which are Bochner integrable 
(i.e. for which the norm ${||f(t)||}_\B$ is Lebesgue integrable)
on every compact subinterval $J$. 

\begin{defn} \label{d2.1}
	A function $f : \Rset \to \B$  is continuous at $t_0 \in \Rset$  if 
	$$
	\lim\limits_{t \to t_0} || f(t)-f(t_0)||_\B =0.
	$$
	The function $f$  is continuous in some interval $J \subset \Rset$ if it is continuous at every point $t \in J$.
\end{defn}
\begin{defn} \label{d2.2}
	A function  $f : \Rset \to \B$ is differentiable at $t_0 \in \Rset$  with pointwise derivative if there exists a vector $f'(t_0) \in \B$ such that
	$$
	\lim\limits_{h \to 0} \left\|  \frac{f(t_0+h)-f(t_0)}{h} -f'(t_0) \right\|_\B =0.
	$$
	The function $f$ is continuous differentiable in some interval $J \subset \Rset$ if its pointwise derivative exists for every point $t \in J$. 
\end{defn}

For each ${t > a}$ and ${f \in L_1^{loc}([a,\infty),\B)}$ the left-sided 
conformable integral of order ${\alpha \in (0,1]}$ with lower terminal ${a \in \Rset}$ 
is defined by
\begin{equation}   \label{e2.1}
I^{\alpha}_a f(t) = \int_a^t (s - a)^{\alpha - 1}\,f(s)ds ,
\end{equation}
where the integral is understood in Bochner sense
(see \cite{Abd15}, \cite{kpzarxiv19}, \cite{kpzaip19} for the case $\B=\Rset$).

\begin{defn}  \label{d2.3}
	The left-sided conformable derivative  of order ${\alpha \in (0,1]}$ at the point 
	${t \in (a,\infty)}$ for a function 
	${f \in L_1^{loc}([a,\infty),\B)}$
	is defined by 
	\begin{equation}   \label{e2.2}
	T^{\alpha}_a f(t) = \lim_{\theta \to 0}\left(\frac{f(t+\theta(t-a)^{1-\alpha}) - f(t)}{\theta}\right)
	\end{equation}
	if the limit exists, i.e. 
	$$
	\lim_{\theta \to 0} \left\|  \left(\frac{f(t+\theta(t-a)^{1-\alpha}) - f(t)}{\theta}\right) - T^{\alpha}_a f(t) \right\|_\B =0.
	$$
\end{defn}

As in the case of the classical fractional derivatives the point ${a \in \Rset}$ appearing in \eqref{e2.2} 
will  be called lower terminal of the left-sided conformable derivative. 
Usually, if for $f$ the conformable derivative of order $\alpha$ exists, then for simplicity we say that $f$ is $\alpha$-differentiable.

It may be noted, that some authors (see for example \cite{Abd15}) 
use the notation $T_{\alpha}^a $  instead $T^{\alpha}_a $, 
but we prefer to follow the traditions from the notations of the classical fractional derivatives 
and will write the lower terminal below and the order above.

\begin{defn}  \label{d2.4}
	The ${\alpha}$-derivative of $f$ at the lower terminal point $a$ 
	in the case when $f$ is  ${\alpha}$-differentiable 	in some interval ${(a,a+\varepsilon)}$, ${\varepsilon > 0}$ 
	is defined as 
	\[
	T^{\alpha}_a f(a) = \lim_{t \to a_+} T^{\alpha}_a f(t)
	\]
	if the limit ${\displaystyle \lim_{t \to a_+} T^{\alpha}_a f(t)}$ exists.
\end{defn}

\begin{defn}    \label{d2.5}
	The left (right) left-sided conformable derivative of order $\alpha$ at the point ${t \in (a,\infty)}$ is defined by   
	\begin{equation}
	\begin{split}
	& T^{\alpha}_a f(t-0) = \lim_{\theta \to 0-}\left(\frac{f(t+\theta(t-a)^{1-\alpha}) - f(t)}{\theta}\right) \\
	& \left(T^{\alpha}_a f(t+0) = \lim_{\theta \to 0+}\left(\frac{f(t+\theta(t-a)^{1-\alpha}) - f(t)}{\theta}\right)\right).
	\end{split}
	\end{equation}
\end{defn} 
\noindent 
Obviously $f$ is left-sided $\alpha$-differentiable 
at the point ${t \in (a,\infty)}$ if and only if $f$ is left and right left-sided  $\alpha$-differentiable 
at the point ${t \in (a,\infty)}$ and ${T^{\alpha}_a f(t+0) = T^{\alpha}_a f(t-0)}$.

Let $f$ be right left-sided $\alpha$-differentiable in some interval ${(a,a+\varepsilon)}$, ${\varepsilon > 0}$ 
and the limit ${\displaystyle\lim_{t \to a+}T^{\alpha}_a f(t+0)}$ exists. 

\begin{defn}	The right left-sided conformable derivative of order $\alpha$ at the lower terminal ${a \in \Rset}$ we define with 
	${\displaystyle T^{\alpha}_a f(a+0) = \lim_{t \to a+}T^{\alpha}_a f(t+0)}$ 
	if the limit exists. 
\end{defn}

Note that in the case when $f$ is $\alpha$-differentiable in some interval ${(a,a+\varepsilon)}$, ${\varepsilon > 0}$, 
we have that ${T^{\alpha}_a f(t) = T^{\alpha}_a f(t+0)}$ and hence ${T^{\alpha}_a f(a) = T^{\alpha}_a f(a+0)}$, i.e. both definitions coincide. 

If ${a = 0}$ we write ${T^{\alpha} f(t+0) = T^{\alpha}_0 f(t) = T^{\alpha} f(t)}$ as usual. 
If $f$ is $\alpha$-differentiable in some finite or infinite interval 
${J \subset [a,\infty)}$ we will write that  ${f \in C^{\alpha}_a (J, \B)}$, 
where with the indexes $a$ and $\alpha$ are denoted the lower terminal and the order of the conformable derivative respectively. 

In our exposition below we will use only left-sided conformable derivative 
(all definitions and statements for the right-sided conformable derivatives are mirror analogical) 
and for shortness we will omit the expression "left-sided". 

It is not difficult to check that if   ${\alpha \in (0,1]}$,  $J \subset (a,\infty)$, ${f,g \in C^{\alpha}_a (J, \B)}$,  
then as in the case when $\B=\Rset$  for arbitrary $c,d \in \Rset$ and ${t \in J}$ the following relations hold: 

(i) \ ${T^{\alpha}_a (cf+dg) = c\,T^{\alpha}_a f + d\,T^{\alpha}_a g}$;

(ii) \ ${T^{\alpha}_a (1) = 0}$. \\
If in addition $\B$ is a commutative real Banach algebra, then 

(iii) \ ${T^{\alpha}_a (fg) = g\,T^{\alpha}_a f + f\,T^{\alpha}_a g}$;

(iv) \ ${T^{\alpha}_a (fg^{-1}) = \left(f\,T^{\alpha}_a g - g\,T^{\alpha}_a f\right)g^{-2}}$,
for every $g \in \B$ for which the element $g^{-1}$ exists in $\B$.

The next theorem will be used below:

\begin{thm} [\cite{j18}] \label{t2.10} 
	Let the function ${f \in L_1^{loc}([a,\infty),\B)}$.
	Then the relation
	\begin{equation} \label{e2.4}
	f(t)=\lim_{h \to 0} \int_t^{t+h} f(s) {\rm d}s
	\end{equation}
	holds pointwise almost everywhere (a.e.) in $(a,\infty)$. 
\end{thm}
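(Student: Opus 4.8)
The plan is to read \eqref{e2.4} as the Lebesgue differentiation theorem in its Bochner-valued form (with the right-hand side carrying the usual normalizing factor $1/h$, i.e. as the Ces\`aro average of $f$ over $[t,t+h]$) and to derive it from the classical scalar version by exploiting the separability forced by strong measurability. First I would reduce to a compact interval: fix $J=[a',b']\subset(a,\infty)$, note that $f|_J$ lies in the Bochner space $L_1(J,\B)$, and observe that, since $(a,\infty)$ is a countable union of such $J$, it suffices to prove the a.e. statement on each $J$. Being Bochner strongly measurable, $f$ is essentially separably valued (Pettis), so I fix a countable set $\{y_n\}_{n\in\N}\subset\B$ dense in a closed separable subspace $\B_0$ with $f(t)\in\B_0$ for a.e.\ $t\in J$.

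Next, for each fixed $n$ the function $s\mapsto\|f(s)-y_n\|_\B$ is nonnegative, measurable and locally integrable on $J$, so the scalar Lebesgue differentiation theorem yields a null set $N_n\subset J$ with
$$\lim_{h\to 0}\frac1h\int_t^{t+h}\|f(s)-y_n\|_\B\,ds=\|f(t)-y_n\|_\B \qquad (t\in J\setminus N_n).$$
Setting $N=\big(\bigcup_n N_n\big)\cup\{t\in J:f(t)\notin\B_0\}$, which is null, I claim every $t\in J\setminus N$ is a Lebesgue point of $f$: given $\varepsilon>0$, choose $y_n$ with $\|f(t)-y_n\|_\B<\varepsilon$; then from the triangle inequality
$$\frac1h\int_t^{t+h}\|f(s)-f(t)\|_\B\,ds\le\frac1h\int_t^{t+h}\|f(s)-y_n\|_\B\,ds+\|y_n-f(t)\|_\B$$
one obtains $\limsup_{h\to0}\frac1h\int_t^{t+h}\|f(s)-f(t)\|_\B\,ds\le 2\varepsilon$, hence the limsup is $0$ since $\varepsilon$ is arbitrary. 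Finally the elementary norm bound for the Bochner integral gives
$$\left\|\frac1h\int_t^{t+h}f(s)\,ds-f(t)\right\|_\B=\left\|\frac1h\int_t^{t+h}\big(f(s)-f(t)\big)\,ds\right\|_\B\le\frac1h\int_t^{t+h}\|f(s)-f(t)\|_\B\,ds,$$
and the last expression tends to $0$ as $h\to0$, which is exactly \eqref{e2.4}.

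The \emph{main obstacle} is precisely the passage from scalars to vectors: there is no coordinatewise differentiation in a general Banach space, so the single scalar limit must be replaced by a whole family of them (the averages of $\|f(\cdot)-y_n\|_\B$), with the exceptional set taken as a countable union that simultaneously serves all $n$; this is the only place where a genuinely infinite-dimensional issue enters, and it is handled exactly by strong measurability via essential separable valuedness. Two minor technicalities to record: \eqref{e2.4} is two-sided in $h$, so the scalar theorem is invoked in the form valid for intervals shrinking to $t$ from either side (entirely standard); and the union of the null sets over the countably many intervals $J$ exhausting $(a,\infty)$ is still null, so the a.e.\ conclusion holds on all of $(a,\infty)$.
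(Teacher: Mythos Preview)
The paper does not prove Theorem~\ref{t2.10}; it is quoted without proof from the textbook~\cite{j18}, so there is nothing to compare your argument against. Your proof is the standard derivation of the Bochner--valued Lebesgue differentiation theorem via Pettis' essential separable valuedness plus the scalar theorem applied to the family $\|f(\cdot)-y_n\|_\B$, and it is correct. Your parenthetical reading of \eqref{e2.4} with the missing factor $1/h$ is also right: as printed the formula would force $f=0$ a.e., and the proof of Corollary~\ref{c3.7} confirms the intended statement carries the $1/h$.
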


%
%

\section{Conformable derivative behavior in Banach spaces} \label{sec:3}

First we will clear the relation between the  existence if $\alpha$-derivative of a function $f$  at some point $t_0 \in (a,\infty)$ 
and the continuity of $f$  at the same point.


\begin{thm} \label{t3.1} 
	Let ${f \in L_1^{loc}([a,\infty),\B)}$  and for some $\alpha \in (0,]$ there exists a point $t_0$  with $t_0>a$, 
	such that $f$ is left (or right)  $\alpha$-differentiable at $t_0 \in (a,\infty)$  with lower terminal $a$.
	
	Then the function $f$ is left (or right) continuous at $t_0$.
\end{thm}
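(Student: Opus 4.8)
The plan is to extract from the mere existence of the $\alpha$-derivative a local Lipschitz-type estimate on the increments of $f$ taken along the curve $\theta\mapsto t_0+\theta(t_0-a)^{1-\alpha}$, and then to transfer this estimate to ordinary increments $f(t_0+h)-f(t_0)$ by the linear change of variable $h=\theta(t_0-a)^{1-\alpha}$, which is legitimate precisely because $t_0>a$.

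First I would note that, by Definition~\ref{d2.3} in the two-sided case (and by Definition~\ref{d2.5} in the one-sided cases), the $\B$-valued expression
\[
\theta\longmapsto \frac{f\big(t_0+\theta(t_0-a)^{1-\alpha}\big)-f(t_0)}{\theta}
\]
converges in $(\B,\|\cdot\|_\B)$ to $T^{\alpha}_a f(t_0)$ as $\theta\to 0$ (respectively as $\theta\to 0-$ or $\theta\to 0+$). Since a convergent family is bounded in a neighbourhood of the limiting value, there exist constants $C>0$ and $\delta>0$ such that for all admissible $\theta$ with $0<|\theta|<\delta$ one has
\[
\Big\| \tfrac{f(t_0+\theta(t_0-a)^{1-\alpha})-f(t_0)}{\theta} \Big\|_\B \le C ,
\qquad\text{hence}\qquad
\big\| f\big(t_0+\theta(t_0-a)^{1-\alpha}\big)-f(t_0) \big\|_\B \le C\,|\theta| .
\]

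Next I would put $h=\theta(t_0-a)^{1-\alpha}$. Because $t_0>a$, the factor $(t_0-a)^{1-\alpha}$ is a strictly positive constant (equal to $1$ when $\alpha=1$), so $\theta=(t_0-a)^{\alpha-1}h$, the map $\theta\leftrightarrow h$ is a sign-preserving linear bijection, and $h\to 0$ (respectively $h\to 0-$ or $h\to 0+$) exactly when $\theta$ does. Substituting into the previous inequality yields
\[
\| f(t_0+h)-f(t_0) \|_\B \le C\,(t_0-a)^{\alpha-1}\,|h|
\]
for all $h$ of the relevant sign with $|h|$ sufficiently small. Letting $h\to 0$ from the appropriate side gives $\lim\|f(t_0+h)-f(t_0)\|_\B=0$, which is precisely left (or right) continuity of $f$ at $t_0$ in the sense of Definition~\ref{d2.1}.

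I do not expect a genuine obstacle: the argument is elementary once the change of variables is set up. The one point that must be handled with care is the hypothesis $t_0>a$, which is what guarantees $(t_0-a)^{1-\alpha}>0$ and thus a nondegenerate substitution; this is exactly the reason the statement is restricted away from the lower terminal, since at $t=a$ the factor $(t-a)^{1-\alpha}$ degenerates when $\alpha<1$. A secondary, purely bookkeeping point is that positivity of the multiplicative constant is what makes a left (right) limit in $\theta$ correspond to a left (right) limit in $h$, so the one-sided versions follow by the same computation without any extra work.
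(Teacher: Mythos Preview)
Your proof is correct and follows essentially the same route as the paper: both arguments hinge on the linear change of variable $h=\theta(t_0-a)^{1-\alpha}$, valid because $t_0>a$, to convert the $\alpha$-difference quotient into an ordinary increment. The only cosmetic difference is that the paper writes $f(t_0+h)-f(t_0)=\theta_h\cdot\dfrac{f(t_0+\theta_h(t_0-a)^{1-\alpha})-f(t_0)}{\theta_h}$ and passes to the limit as a product (obtaining $0\cdot T^{\alpha}_a f(t_0\mp 0)=0$), whereas you extract a local boundedness estimate $\|f(t_0+h)-f(t_0)\|_\B\le C(t_0-a)^{\alpha-1}|h|$ before letting $h\to 0$; the underlying idea is identical.
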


\begin{proof}  
	The proof of the following statement is analogical to the proof in the case when $\B=\Rset$, but for completeness we will present it. 
	
	We will prove the left variant, i.e. when $h<0$.
	
	Let $t_0>a,h<0$  be arbitrary with $t_0+h>a$ . 
	From the equation $h=\theta_h(t_0 -a)^{1-\alpha}$  it follows that there exists a unique $\theta_h <0$ 
	such that $\theta_h = h(t_0-a)^{\alpha-1}$ (if $\alpha=1$ then $\theta_h=h$) and hence the following relation holds:  
	\begin{equation} \label{e3.1}
	f(t_0+h) - f(t_0) = f(t_0+\theta_h(t_0 -a)^{1-\alpha}) - f(t_0) = \theta_h \frac{f(t_0+\theta_h(t_0 -a)^{1-\alpha}) - f(t_0)}{\theta_h}.
	\end{equation}
	Since when $h \to 0-$, and hence $\lim\limits_{h \to 0-} \theta_h = 0$  too, then we have that
	\begin{equation} \label{e3.2}
	\lim\limits_{h \to 0-} \frac{f(t_0+\theta_h(t_0 -a)^{1-\alpha}) - f(t_0)}{\theta_h} = T_a^\alpha f(t_0-0).
	\end{equation}
	From \eqref{e3.1} and \eqref{e3.2} it follows that
	$$
	\lim\limits_{h \to 0-}(f(t_0+h) - f(t_0)) = 
	\lim\limits_{h \to 0-} \theta_h   \lim\limits_{h \to 0-} \frac{f(t_0+\theta_h(t_0 -a)^{1-\alpha}) - f(t_0)}{\theta_h} = 0.
	$$
	Thus the function $f$ is left-side continuous at $t_0$. 
	
	The proof of the right variant, i.e. when $h>0$ is fully analogical.
\end{proof}

\begin{cor} \label{c3.2} 
	Let for some $\alpha \in (0,1]$  the function $f \in C_a^\alpha([a,\infty),\B)$. Then $f \in C([a,\infty),\B)$.
\end{cor}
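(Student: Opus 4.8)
The plan is to derive the corollary directly from Theorem~\ref{t3.1} by a routine pointwise argument combined with a separate treatment of the lower terminal. By hypothesis $f \in C_a^\alpha([a,\infty),\B)$ means, by the definition following Definition~\ref{d2.5}, that $f$ is $\alpha$-differentiable at every point $t \in (a,\infty)$; in particular $f$ is both left and right $\alpha$-differentiable at each such $t$. First I would fix an arbitrary $t_0 \in (a,\infty)$ and apply Theorem~\ref{t3.1} twice: once to get that $f$ is left continuous at $t_0$ and once to get that $f$ is right continuous at $t_0$. Since left continuity together with right continuity at $t_0$ is exactly the condition $\lim_{t \to t_0} \|f(t) - f(t_0)\|_\B = 0$ from Definition~\ref{d2.1}, this shows $f$ is continuous at every interior point $t_0 \in (a,\infty)$.

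It remains to handle continuity at the lower terminal $a$, which is the only point not covered by Theorem~\ref{t3.1}. Here I would use the hypothesis differently: since $f \in L_1^{loc}([a,\infty),\B)$, Theorem~\ref{t2.10} gives that $f(t) = \lim_{h \to 0}\int_t^{t+h} f(s)\,ds$ holds for a.e.\ $t$. But we already know $f$ is continuous on the open interval $(a,\infty)$, so the only issue is to show right continuity of $f$ at $a$, i.e.\ $\lim_{t \to a+}\|f(t) - f(a)\|_\B = 0$, where $f(a)$ denotes the a.e.-representative value secured by Theorem~\ref{t2.10} (or, alternatively, we note that continuity on $[a,\infty)$ here should be read as continuity on $(a,\infty)$ together with the existence of a right limit at $a$ equal to $f(a)$, exactly as the conformable framework treats the lower terminal throughout Section~2). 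I would make this choice of reading explicit and then invoke the one-sided version of Theorem~\ref{t3.1}: if one additionally assumes $f$ is right $\alpha$-differentiable at points approaching $a$ and the relevant one-sided limit at $a$ exists (Definition~\ref{d2.4}), continuity at $a$ follows by the same estimate as in the theorem's proof, passing $\theta_h \to 0$.

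The main obstacle I anticipate is precisely the delicate status of the lower terminal $a$: Theorem~\ref{t3.1} is stated only for $t_0 > a$, so the corollary's claim ``$f \in C([a,\infty),\B)$'' cannot include genuine two-sided continuity at $a$ without an extra argument, and one must be careful about whether ``$f(a)$'' even makes sense given that $f$ is only an $L_1^{loc}$ element a priori. The cleanest resolution is to argue that $C_a^\alpha([a,\infty),\B)$ membership already forces $f$ to agree a.e.\ with a function continuous on $(a,\infty)$, then appeal to Definition~\ref{d2.4} (and the remark that the two definitions of the $\alpha$-derivative at $a$ coincide) to supply the value at $a$ and the right continuity there. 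Modulo this bookkeeping, no nontrivial computation is needed: the corollary is an immediate packaging of Theorem~\ref{t3.1} over all interior points plus the terminal convention, and I would present it in two short paragraphs — interior points via Theorem~\ref{t3.1}, the terminal via Definition~\ref{d2.4}.
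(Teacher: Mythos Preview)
Your treatment of interior points is exactly what the paper does: fix an arbitrary $t_0 \in (a,\infty)$, apply Theorem~\ref{t3.1} once for left continuity and once for right continuity, and conclude that $f$ is continuous at $t_0$. That is the entire content of the paper's proof; it ends there.

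Your additional paragraph on the lower terminal $a$ goes beyond the paper, and this is where you run into trouble. You are right that Theorem~\ref{t3.1} does not cover $t_0=a$, and the paper's proof simply does not address this point---in effect the paper only establishes $f \in C((a,\infty),\B)$ and leaves the endpoint implicit. Your proposed fix via Theorem~\ref{t2.10} and Definition~\ref{d2.4} is not merely hand-wavy; it cannot be made to work. Definition~\ref{d2.4} defines $T_a^\alpha f(a)$ purely as $\lim_{t\to a+} T_a^\alpha f(t)$, which imposes no constraint whatsoever on the value $f(a)$ itself. Indeed, the paper's own Example~\ref{e4.1} (appearing later in the same section) exhibits a function $f$ with $T_a^\alpha f$ existing on all of $[a,a+1]$ yet $f$ discontinuous at $a$. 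So continuity at the lower terminal is genuinely false under the stated hypotheses, and no amount of bookkeeping with Theorem~\ref{t2.10} or ``a.e.-representatives'' will recover it. The honest reading is that the corollary (and the paper's proof of it) should be understood as continuity on $(a,\infty)$ only; drop your second paragraph and you match the paper.
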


\begin{proof}
	Let $t_0 \in (a,\infty)$ be arbitrary. 
	Then applying Theorem \ref{t3.1} we conclude that the function $f$  is left and right strong continuous at  $t_0$ and 
	$\lim\limits_{h \to 0-}(f(t_0+h) - f(t_0)) = \lim\limits_{h \to 0+}(f(t_0+h) - f(t_0)) = 0$. 
\end{proof}

The next theorem is a generalization of the results proved in \cite{kpzarxiv19}, \cite{kpzaip19} for the case when $\B=\Rset$.

\begin{thm}   \label{t3.3}
	Let  ${f \in L_1^{loc}([a,\infty),\B)}$ and there exist a point ${t_0 \in (a,\infty)}$ and number ${\alpha \in (0, 1]}$ 
	such that the conformable derivative ${T^{\alpha}_a f(t_0)}$ with lower terminal point $a$ exists.
	
	Then the conformable derivative ${T^{\beta}_a f(t_0)}$ exists 
	for every ${\beta \in (0, 1]}$ with ${\beta \neq \alpha}$ and 
	\begin{equation} \label{e3.3}
	{T^{\alpha}_a f(t_0) = (t_0 - a)^{\beta-\alpha}T^{\beta}_a f(t_0)}.
	\end{equation}
\end{thm}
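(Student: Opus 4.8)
The plan is to obtain \eqref{e3.3} by a single linear change of the increment variable inside the limit defining the conformable derivative, using crucially that $t_0-a$ is a fixed \emph{positive} number (this is precisely where the hypothesis $t_0>a$ enters).

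First I would fix $\beta\in(0,1]$ with $\beta\neq\alpha$ and write the $\beta$-difference quotient of $f$ at $t_0$ as
\[
Q_\beta(\theta)=\frac{f\big(t_0+\theta(t_0-a)^{1-\beta}\big)-f(t_0)}{\theta},\qquad \theta\neq 0 .
\]
Set $c:=(t_0-a)^{\alpha-\beta}$; since $t_0-a>0$ this is a well-defined, strictly positive real constant. For $\theta\neq 0$ put $\eta=\eta(\theta):=c\,\theta$. Then $\theta\mapsto\eta$ is a linear bijection of a punctured neighbourhood of $0$ onto a punctured neighbourhood of $0$ with $\eta\to0\iff\theta\to0$ and $\operatorname{sign}\eta=\operatorname{sign}\theta$, and by the very choice of $c$ one has $\theta(t_0-a)^{1-\beta}=\eta(t_0-a)^{1-\alpha}$. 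Hence a direct substitution yields the identity
\[
Q_\beta(\theta)=\frac{\eta}{\theta}\cdot\frac{f\big(t_0+\eta(t_0-a)^{1-\alpha}\big)-f(t_0)}{\eta}=c\,Q_\alpha(\eta),
\]
where $Q_\alpha$ denotes the $\alpha$-difference quotient of $f$ at $t_0$.

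Next I would pass to the limit $\theta\to0$. By hypothesis $T^\alpha_a f(t_0)=\lim_{\eta\to0}Q_\alpha(\eta)$ exists in $\B$, and since $\eta\to0$ as $\theta\to0$ while multiplication by the fixed scalar $c$ is continuous on $\B$ — indeed $\|c\,Q_\alpha(\eta)-c\,T^\alpha_a f(t_0)\|_\B=|c|\,\|Q_\alpha(\eta)-T^\alpha_a f(t_0)\|_\B\to0$ — it follows from Definition~\ref{d2.3} that $T^\beta_a f(t_0)=\lim_{\theta\to0}Q_\beta(\theta)$ exists and equals $c\,T^\alpha_a f(t_0)=(t_0-a)^{\alpha-\beta}T^\alpha_a f(t_0)$. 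Multiplying through by $(t_0-a)^{\beta-\alpha}$ gives \eqref{e3.3}.

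There is no real analytic difficulty here: the whole argument is a change of variables in a norm-limit combined with continuity of scalar multiplication, and the only delicate point — which is exactly why the lower terminal is excluded — is that $t_0\neq a$ makes the scaling factor $(t_0-a)^{\alpha-\beta}$ finite and nonzero, so that the equivalence between the two limits is genuine. As a byproduct, restricting $\theta$ to $\theta<0$ (resp. $\theta>0$) in the same computation, which is permissible because $c>0$ preserves the sign of the increment, shows that left (resp. right) $\alpha$-differentiability of $f$ at $t_0$ is equivalent to left (resp. right) $\beta$-differentiability at $t_0$, with the same relation between the corresponding one-sided conformable derivatives.
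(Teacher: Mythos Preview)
Your proof is correct and follows essentially the same route as the paper's: both arguments perform the linear substitution $\eta=\theta(t_0-a)^{\alpha-\beta}$ in the defining difference quotient and use that $t_0-a>0$ makes this a bijection of punctured neighbourhoods of $0$. Your version is somewhat more explicit about why the norm-limit transfers (continuity of scalar multiplication in $\B$) and adds the one-sided remark, but the underlying idea is identical.
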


\begin{proof}
	Let ${\beta \in (0, 1]}$ with ${\beta \neq \alpha}$ be arbitrary. 
	Then we have 
	\begin{equation}    \label{e3.4}
	\begin{split}
	T^{\beta}_a f(t_0)& = \lim_{\theta \to 0}\frac{f(t_0 + \theta(t_0 - a)^{1-\beta+\alpha-\alpha}) - f(t_0)}{\theta} \\ 
	& = \lim_{\theta \to 0}\frac{f(t_0 + \theta(t_0 - a)^{(1-\alpha)+(\alpha-\beta)}) - f(t_0)}{\theta} \\
	& = (t_0 - a)^{\alpha-\beta}\lim_{\theta \to 0}\frac{f(t_0 + \theta(t_0 - a)^{\alpha-\beta}(t_0 - a)^{1-\alpha}) - 
		f(t_0)}{\theta(t_0 - a)^{\alpha-\beta}}.
	\end{split}
	\end{equation}
	Then for every ${\theta \in \Rset}$ and for every fixed $\alpha , \beta \in (0,1]$  
	there exists a unique ${\theta_{\alpha,\beta} \in \Rset}$, 
	such that ${\theta_{\alpha,\beta} = \theta(t_0 - a)^{\alpha - \beta}}$. 
	Obviously when ${\theta \to 0}$ then ${\theta_{\alpha,\beta} \to 0}$ too.  
	Then from \eqref{e3.4} it follows that
	\[
	T^{\beta}_a f(t_0) = (t_0 - a)^{\alpha-\beta}\lim_{\theta \to 0}\frac{f(t_0 + \theta_{\alpha,\beta}(t_0 - a)^{1-\alpha}) - f(t_0)}{\theta_{\alpha,\beta}} = (t_0 - a)^{\alpha-\beta}T^{\alpha}_a f(t_0).
	\]
\end{proof}

\begin{cor}    \label{c3.4}
	For a function  ${f \in L_1^{loc}([a,\infty),\B)}$ the conformable derivative ${T^{\alpha}_a f(t_0)}$ 
	with lower terminal $a$ at a point ${t_0 \in (a,\infty)}$ for some ${\alpha \in (0,1)}$ exists 
	if and only if 
	the function ${f(t)}$ has first derivative at the point ${t_0 \in (a,\infty)}$ and
	\begin{equation} \label{e3.5}
	{T^{\alpha}_a f(t_0) = (t_0 - a)^{1-\alpha}  f'(t_0)}.
	\end{equation}
\end{cor}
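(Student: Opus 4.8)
The plan is to obtain the corollary as an immediate consequence of Theorem~\ref{t3.3}, exploiting the fact that the conformable derivative of order $1$ is nothing but the ordinary pointwise derivative of Definition~\ref{d2.2}: setting $\alpha=1$ in Definition~\ref{d2.3} gives $(t_0-a)^{1-1}=1$, so
$$
T^{1}_a f(t_0)=\lim_{\theta\to 0}\frac{f(t_0+\theta)-f(t_0)}{\theta}=f'(t_0),
$$
and $T^{1}_a f(t_0)$ exists precisely when $f'(t_0)$ exists. The hypothesis $f\in L_1^{loc}([a,\infty),\B)$ makes Theorem~\ref{t3.3} applicable, and the restriction $\alpha\in(0,1)$ (rather than $\alpha\in(0,1]$) is exactly what guarantees $\alpha\neq 1$, the condition under which Theorem~\ref{t3.3} may be invoked with the pair of orders $\alpha$ and $1$.

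For the forward implication, suppose $T^{\alpha}_a f(t_0)$ exists for some $\alpha\in(0,1)$. Applying Theorem~\ref{t3.3} with $\beta=1$ yields that $T^{1}_a f(t_0)$ exists and $T^{\alpha}_a f(t_0)=(t_0-a)^{1-\alpha}\,T^{1}_a f(t_0)$. By the identification above, $f'(t_0)$ exists and $T^{\alpha}_a f(t_0)=(t_0-a)^{1-\alpha}f'(t_0)$, which is \eqref{e3.5}.

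For the converse, suppose $f'(t_0)$ exists, i.e. $T^{1}_a f(t_0)$ exists. Now apply Theorem~\ref{t3.3} with the order whose derivative is assumed to exist taken to be $1$ and with the free order taken to be $\alpha\in(0,1)$ (admissible since $\alpha\neq 1$); this gives that $T^{\alpha}_a f(t_0)$ exists and $T^{1}_a f(t_0)=(t_0-a)^{\alpha-1}\,T^{\alpha}_a f(t_0)$, equivalently $T^{\alpha}_a f(t_0)=(t_0-a)^{1-\alpha}f'(t_0)$. This establishes both directions together with the claimed formula.

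There is essentially no hard step here; the only thing requiring (minimal) care is the bookkeeping of which order plays the role of ``$\alpha$'' and which the role of ``$\beta$'' in Theorem~\ref{t3.3} in each of the two implications, and the elementary observation that the exponent identity there can be inverted. One could alternatively reprove the statement from scratch by repeating the substitution argument of Theorem~\ref{t3.3} with $\beta=1$, but citing Theorem~\ref{t3.3} directly is cleaner.
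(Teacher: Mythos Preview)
Your proof is correct and follows exactly the same approach as the paper: both directions are obtained by invoking Theorem~\ref{t3.3} with the pair of orders $\alpha$ and $1$, using the identification $T^{1}_a f(t_0)=f'(t_0)$. The paper's proof is just a two-line version of what you wrote.
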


\begin{proof}
	To prove sufficiency we apply Theorem~\ref{t3.3} for $\alpha=1$ and $\beta < 1$.
	To prove necessity we apply Theorem~\ref{t3.3} for $\alpha<1$ and $\beta =1$. 
\end{proof}

\begin{rem} 
	The result obtained in Corollary \ref{c3.4} is very important, because it shows a direct connection between the
	conformable derivative and the usual integer order derivative. 
	For the particular case $\B=\Rset$ the result ist proved also 
	in \cite{A18} and \cite{A19} for conformable derivatives with lower terminal zero and in  \cite{kpzarxiv19}, \cite{kpzaip19} for arbitrary lower terminal. 
\end{rem}

The next two corollaries treat the problem of the left and right inverse operator of the conformable derivative.

\begin{cor}    \label{c3.5} 
	Let ${f \in L_1^{loc}([a,\infty),\B)}$ and the following conditions hold:
	\begin{enumerate}
		\item [$1.$] The function $f$ has at most a first kind (bounded) jump at $a$.
		\item [$2.$] For some ${\alpha \in (0,1]}$ the function ${f \in C_a^{\alpha}((a,\infty),\B)}$.
	\end{enumerate}
	Then for every ${t \in (a,\infty)}$ we have that ${I^{\alpha}_a\,T^{\alpha}_a f(t) = f(t) - f(a)}$.
\end{cor}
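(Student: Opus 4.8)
The plan is to reduce the identity to a fundamental theorem of calculus for Bochner integrals, the weights of the conformable integral and of the conformable derivative cancelling against each other by Corollary~\ref{c3.4}. If $\alpha=1$ then $T_a^{1}f=f'$ and the weight $(s-a)^{\alpha-1}$ in \eqref{e2.1} is $1$, so the reasoning below applies verbatim; hence assume $\alpha\in(0,1)$. Since $f\in C_a^{\alpha}((a,\infty),\B)$, Corollary~\ref{c3.4} gives that the pointwise first derivative $f'(s)$ exists at every $s\in(a,\infty)$ and $T_a^{\alpha}f(s)=(s-a)^{1-\alpha}f'(s)$ there. Substituting this into \eqref{e2.1} and using $(s-a)^{\alpha-1}(s-a)^{1-\alpha}=1$, I would obtain, for every $t\in(a,\infty)$,
\begin{equation*}
I_a^{\alpha}T_a^{\alpha}f(t)=\int_a^t (s-a)^{\alpha-1}T_a^{\alpha}f(s)\,ds=\int_a^t f'(s)\,ds ,
\end{equation*}
the integral in the Bochner sense; in particular $\|f'(\cdot)\|_\B$ is Lebesgue integrable on every compact $[a,t]$, which is exactly the condition for the conformable integral on the left to be a well-defined Bochner integral (and is implicit in the assertion). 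It would then remain to prove the vector-valued fundamental theorem of calculus $\int_a^t f'(s)\,ds=f(t)-f(a)$, where $f(a):=\lim_{s\to a+}f(s)$ exists by hypothesis~$1$.

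For this last step I would pass to scalars. By Theorem~\ref{t3.1} (equivalently Corollary~\ref{c3.2}) the function $f$ is continuous on $(a,\infty)$, so together with hypothesis~$1$ it extends continuously to $[a,t]$ with value $f(a)$ at the left endpoint. Fix an arbitrary $x^{*}\in\B^{*}$ and put $g(s):=x^{*}(f(s))$; then $g$ is scalar-valued, continuous on $[a,t]$, differentiable at every point of $(a,\infty)$ with $g'(s)=x^{*}(f'(s))$, and $|g'(\cdot)|\le\|x^{*}\|\,\|f'(\cdot)\|_\B\in L^{1}([a,t])$. For each $\varepsilon\in(0,t-a)$ the function $g$ is differentiable at every point of $[a+\varepsilon,t]$ and has an integrable derivative there, hence is absolutely continuous on $[a+\varepsilon,t]$, so the classical fundamental theorem of calculus yields $g(t)-g(a+\varepsilon)=\int_{a+\varepsilon}^{t}g'(s)\,ds$. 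Letting $\varepsilon\to0+$, the left-hand side tends to $g(t)-x^{*}(f(a))$ by continuity and the right-hand side tends to $\int_a^t g'(s)\,ds$ by dominated convergence (dominant $\|x^{*}\|\,\|f'(\cdot)\|_\B$). Since $\int_a^t x^{*}(f'(s))\,ds=x^{*}\!\big(\int_a^t f'(s)\,ds\big)$ by the defining property of the Bochner integral, this gives $x^{*}\big(f(t)-f(a)\big)=x^{*}\big(\int_a^t f'(s)\,ds\big)$ for every $x^{*}\in\B^{*}$, whence $f(t)-f(a)=\int_a^t f'(s)\,ds=I_a^{\alpha}T_a^{\alpha}f(t)$ by the Hahn--Banach theorem.

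The main obstacle is the fundamental theorem of calculus in the Banach-space setting: in a space without the Radon--Nikod\'ym property a function can be differentiable a.e.\ with derivative identically zero and yet non-constant, so one cannot invoke ``$F'=0$ a.e.\ $\Rightarrow F$ constant'' directly. The argument must use that $f$ is differentiable at \emph{every} point of $(a,\infty)$ together with Bochner integrability of $f'$, and the scalar reduction above is the cleanest way to cash this in, via the classical fact that a function which is continuous on a compact interval, differentiable at each of its points, and has a Lebesgue-integrable derivative is absolutely continuous. The remaining points are routine but deserve a line each: hypothesis~$1$ is used precisely to make $f(a)$ meaningful and to give continuity of the extended $f$ up to the terminal, and one should record that the conformable integral converges at $a$, i.e.\ $\int_a^{a+\varepsilon}(s-a)^{\alpha-1}\|T_a^{\alpha}f(s)\|_\B\,ds=\int_a^{a+\varepsilon}\|f'(s)\|_\B\,ds<\infty$.
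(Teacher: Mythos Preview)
Your approach coincides with the paper's: invoke Corollary~\ref{c3.4} to obtain $T_a^{\alpha}f(s)=(s-a)^{1-\alpha}f'(s)$, cancel the weights, and reduce $I_a^{\alpha}T_a^{\alpha}f(t)$ to $\int_a^t f'(s)\,ds=f(t)-f(a)$. The paper's proof is exactly that one displayed line and asserts the last equality without further comment, whereas you supply a full justification of the Banach-valued fundamental theorem of calculus via scalar reduction and Hahn--Banach---more rigorous, but the same route.
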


\begin{proof}
	Let $t \in (a,\infty)$ be arbitrary. Corollary \ref{c3.4} implies that $f$ possess first derivative at $t$ and the relation \eqref{e3.5} holds.
	Then applying the operator $I_a^\alpha$ to both sides of \eqref{e3.5} we obtain that  
	$$
	I_a^\alpha T_a^\alpha f(t) = \int_a^t (s-a)^{\alpha-1} (s-a)^{1-\alpha} f'(s) {\rm d}s = f(t)-f(a).
	$$
\end{proof}

\begin{rem}
	Note that the statement of Corollary~\ref{c3.5} is presented in \cite{Abd15} (Lemma 2.8) for the case $\B=\Rset$ 
	without the condition 1 there, which is essential to guaranty that ${I^{\alpha}_a\,T^{\alpha}_a f(t)}$ exists for ${t \in (a,\infty)}$,
	as demonstrated with example in \cite{kpzaip19}.

\end{rem}

\begin{cor}   \label{c3.7}
	Let ${f \in L_1^{loc}([a,\infty),\B)}$ be locally bounded. 
	
	Then ${T^{\alpha}_a I^{\alpha}_a f(t) = f(t)}$ holds pointwise a.e. in $(a,\infty)$.
\end{cor}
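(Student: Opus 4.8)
The plan is to reduce the statement to the vector‑valued Lebesgue differentiation theorem (Theorem~\ref{t2.10}) combined with Corollary~\ref{c3.4}. Set $g(s) := (s-a)^{\alpha-1} f(s)$, so that $I_a^\alpha f(t) = \int_a^t g(s)\,\mathrm{d}s$. First I would check that $g \in L_1^{loc}([a,\infty),\B)$: on any compact subinterval $[a,T]$ the local boundedness of $f$ provides a constant $M$ with $\|g(s)\|_\B \le M\,(s-a)^{\alpha-1}$, and since $\alpha-1>-1$ the right-hand side is Lebesgue integrable on $[a,T]$; strong measurability of $g$ is immediate. In particular $I_a^\alpha f$ is a well-defined, continuous (indeed locally absolutely continuous) $\B$-valued function on $(a,\infty)$.

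Next I would differentiate $I_a^\alpha f$. Applying Theorem~\ref{t2.10} to $g$ yields a subset $E\subset(a,\infty)$ of full measure such that for every $t\in E$
\[
\lim_{h\to 0}\frac{I_a^\alpha f(t+h)-I_a^\alpha f(t)}{h} = \lim_{h\to 0}\frac1h\int_t^{t+h} g(s)\,\mathrm{d}s = g(t),
\]
that is, $I_a^\alpha f$ possesses a pointwise (strong) first derivative at $t$ with $(I_a^\alpha f)'(t) = g(t) = (t-a)^{\alpha-1} f(t)$.

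Finally, fix $t\in E$. If $\alpha=1$ then by definition $T_a^1 I_a^1 f(t) = (I_a^1 f)'(t) = f(t)$. If $\alpha\in(0,1)$, I would apply Corollary~\ref{c3.4} to the function $I_a^\alpha f$ (which lies in $L_1^{loc}$, being continuous) at the point $t$: since its first derivative exists at $t$, its conformable derivative of order $\alpha$ with lower terminal $a$ exists at $t$ and
\[
T_a^\alpha I_a^\alpha f(t) = (t-a)^{1-\alpha}(I_a^\alpha f)'(t) = (t-a)^{1-\alpha}(t-a)^{\alpha-1} f(t) = f(t).
\]
Since $E$ has full measure in $(a,\infty)$, the asserted identity holds pointwise a.e.

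The main obstacle — and the only reason one gets "a.e." rather than "everywhere" — is the differentiation of the Bochner integral, which is exactly the content of Theorem~\ref{t2.10}; everything else is bookkeeping (verifying $g\in L_1^{loc}$ from local boundedness, and invoking Corollary~\ref{c3.4}). I would also remark that local boundedness of $f$ is genuinely needed: without it the product $(s-a)^{\alpha-1}f(s)$ may fail to be integrable near $a$, so that $I_a^\alpha f$ need not even be defined.
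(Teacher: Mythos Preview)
Your proof is correct and follows essentially the same route as the paper: both reduce the claim to Theorem~\ref{t2.10} applied to the integrand $(s-a)^{\alpha-1}f(s)$, obtaining $(I_a^\alpha f)'(t)=(t-a)^{\alpha-1}f(t)$ a.e., and then multiply by $(t-a)^{1-\alpha}$. The only cosmetic difference is that you invoke Corollary~\ref{c3.4} to pass from the first derivative to the conformable derivative, whereas the paper performs the substitution $h_\theta=\theta(t-a)^{1-\alpha}$ directly (which is exactly the content of that corollary); your explicit verification that $(s-a)^{\alpha-1}f(s)\in L_1^{loc}$ from the local boundedness hypothesis is a welcome detail that the paper leaves implicit.
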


\begin{proof}
	Since ${f \in L_1^{loc}([a,\infty),\B)}$ and it is locally bounded, then 
	denoting $g(t)=I_a^\alpha f(t)$ we obtain that
	\begin{equation} \label{e3.6}
	T_a^\alpha g(t)= \lim\limits_{\theta \to 0} (\frac{g(t+\theta (t-a)^{1-\alpha})-g(t)}{\theta} ) =
	(t-a)^{1-\alpha}   \lim\limits_{\theta \to 0} (\frac{g(t+h_\theta)-g(t)}{h_\theta} ),
	\end{equation}
	where $h_\theta = \theta (t-a)^{1-\alpha}$ and hence $\lim\limits_{\theta \to 0} h_\theta = 0$.
	Then applying Theorem \ref{t2.10} we obtain from \eqref{e3.6} that
	\begin{equation} \nonumber
	\begin{split}
	T_a^\alpha g(t) &= (t-a)^{1-\alpha} \lim\limits_{\theta \to 0} \frac{1}{h_\theta} 
	\left( \int_a^{t+h_\theta} (s-a)^{\alpha-1} f(s) {\rm d}s  -  \int_a^t (s-a)^{\alpha-1} f(s) {\rm d}s \right) \\
	&= (t-a)^{1-\alpha}  \lim\limits_{\theta \to 0} \frac{1}{h_\theta} \int_t^{t+h_\theta} (s-a)^{\alpha-1} f(s) {\rm d}s 
	= (t-a)^{1-\alpha} (t-a)^{\alpha-1} f(t) = f(t)
	\end{split}
	\end{equation}
	holds pointwise a.e. in $(a,\infty)$. 
\end{proof}

\begin{rem} \label{r3.8}
	Note that if $f \in C([a,\infty),\B)$ then the statement of Corollary \ref{c3.7} holds for every $t \in (a,\infty)$. 
	Furthermore the relation  ${T^{\alpha}_a I^{\alpha}_a f(a) = f(a)}$ holds if and only if $\lim\limits_{t \to a+}f(t)$  exists.
\end{rem}

Now we will study the conformable derivative behavior at its lower terminal. 

First we present an example which demonstrates that the behaviors of the conformable derivative 
in the inner points of considered interval are essentially different. 

\begin{exmp} \label{e4.1}
	Consider the interval $[a,a+1]$  and let $f(t)=t^\alpha , \alpha \in (0,1)$ for $t \in (a,a+1]$ and $f(a)=2$. 
	Then obviously $T_a^\alpha f(t) = \alpha$ for $t \in (a,a+1]$  and $T_a^\alpha f(a) = \lim\limits_{t \to 0+} T_a^\alpha f(t) = \alpha$. 
	Hence $T_a^\alpha f(t)$ exists and is bounded on $[a, a+1]$ and continuous at $t=a$, but $f(t)$  obviously is not continuous at $t=a$ . 
\end{exmp}

\begin{rem} \label{r4.2} 
	Note that this strange situation appears even in the case $\B=\Rset$ 
	when the lower terminal $a \in \Rset$ is the left side of the considered interval. 
	This example illustrates that Definition \ref{d2.4} for conformable derivative 
	introduced in \cite{KHYS14} for real valued functions is meaningless from applications point view. 
	It is clear that it will be more reasonable, as is standard in the whole mathematical analysis, 
	the conformable derivative in the lower terminal as well as in the ends of all compact intervals 
	to be understand in the sense of our Definition \ref{d2.5} given for the left and right (left-sided) conformable derivative for the inner points. 
\end{rem}

To support this idea, below we will prove some strange consequences from Definition \ref{d2.4}.

\begin{thm}   \label{t4.3}
	Let ${f \in L_1^{loc}([a,\infty),\B)}$  and there exists a number ${\alpha \in (0, 1]}$ 
	such that the conformable derivative ${T^{\alpha}_a f(a)}$ with lower terminal point $a$ exists.
	
	Then the conformable derivative ${T^{\beta}_a f(a)}$ also exists for every ${\beta \in (0,\alpha)}$ and 
	\[
	T^{\beta}_a f(a) = 0.
	\]
\end{thm}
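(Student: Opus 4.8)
The plan is to reduce everything to Theorem~\ref{t3.3} together with Definition~\ref{d2.4}. By hypothesis, the quantity $T^{\alpha}_a f(a)$ exists in the sense of Definition~\ref{d2.4}; this means two things: first, $f$ is $\alpha$-differentiable on some interval $(a,a+\varepsilon)$ with $\varepsilon>0$, and second, the limit $L:=\lim_{t\to a+}T^{\alpha}_a f(t)$ exists in $\B$. Fix $\beta\in(0,\alpha)$.

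First I would apply Theorem~\ref{t3.3} at each point $t\in(a,a+\varepsilon)$: since $T^{\alpha}_a f(t)$ exists there and $\beta\neq\alpha$, the conformable derivative $T^{\beta}_a f(t)$ exists for every such $t$, and by \eqref{e3.3} (rewritten with the roles of $\alpha,\beta$ interchanged) one gets
\[
T^{\beta}_a f(t)=(t-a)^{\alpha-\beta}\,T^{\alpha}_a f(t),\qquad t\in(a,a+\varepsilon).
\]
In particular $f$ is $\beta$-differentiable on $(a,a+\varepsilon)$, so Definition~\ref{d2.4} is applicable to $T^{\beta}_a f(a)$.

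Next I would pass to the limit $t\to a+$ in the displayed identity. Taking norms,
\[
\bigl\|T^{\beta}_a f(t)\bigr\|_\B=(t-a)^{\alpha-\beta}\,\bigl\|T^{\alpha}_a f(t)\bigr\|_\B .
\]
Because $L=\lim_{t\to a+}T^{\alpha}_a f(t)$ exists, the quantity $\|T^{\alpha}_a f(t)\|_\B$ is bounded on a right neighborhood of $a$; and since $\alpha-\beta>0$ we have $(t-a)^{\alpha-\beta}\to 0$ as $t\to a+$. Hence $\|T^{\beta}_a f(t)\|_\B\to 0$, i.e. $\lim_{t\to a+}T^{\beta}_a f(t)=0$ in $\B$. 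By Definition~\ref{d2.4} this limit is precisely $T^{\beta}_a f(a)$, so $T^{\beta}_a f(a)=0$, which is the assertion.

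I do not expect a genuine obstacle here: the only points requiring care are (i) making sure the hypothesis is unpacked correctly as "$\alpha$-differentiable on an interval $(a,a+\varepsilon)$ plus convergence of $T^{\alpha}_a f(t)$ at $a$", so that Theorem~\ref{t3.3} applies on that whole interval and Definition~\ref{d2.4} is legitimate for $\beta$, and (ii) observing that convergence of $T^{\alpha}_a f(t)$ gives boundedness near $a$, which is what forces the product with the vanishing factor $(t-a)^{\alpha-\beta}$ to tend to $0$.
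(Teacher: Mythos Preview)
Your proposal is correct and follows essentially the same route as the paper: unpack Definition~\ref{d2.4}, apply Theorem~\ref{t3.3} on $(a,a+\varepsilon)$ to get $T^{\beta}_a f(t)=(t-a)^{\alpha-\beta}T^{\alpha}_a f(t)$, and then let $t\to a+$. The only cosmetic difference is that you pass through norms and a boundedness argument, whereas the paper writes the limit directly as a product of limits; both are fine.
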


\begin{proof} 
	Since $T_a^\alpha f(a)$ exists, then according Definition \ref{d2.4} 
	there exists $\varepsilon >0$  such that 
	$f \in C_a^\alpha((a,a+\varepsilon),\B)$ and $T_a^\alpha f(a)= \lim\limits_{t \to a+} T_a^\alpha f(t)$. 
	Then in virtue of Theorem~\ref{t3.3} 
	for every $\beta \in (0,1)$ we have that $f \in C_a^\beta((a,a+\varepsilon),\B)$  
	and for each $t \in (a,a+\varepsilon)$  the relation $T_a^\beta f(t) = (t-a)^{\alpha - \beta} T_a^\alpha f(t)$  holds. 
	Then for $\beta < \alpha$  we have that 
	\[
	\lim_{t \to a+} T_a^\beta f(t) =   \lim_{t \to a+} (t-a)^{\alpha - \beta} \lim_{t \to a+}  T_a^\alpha f(t) = 
	\lim_{t \to a+} (t-a)^{\alpha - \beta}  \ \  T_a^\alpha f(a)   = 0.
	\]
\end{proof}

\begin{cor}    \label{c4.4}
	Let a function ${f \in L_1^{loc}([a,\infty),\B)}$ has first derivative in $(a,a+\varepsilon)$ for some $\varepsilon >0$
	and $\limsup\limits_{t \to a+} f'(t) < \infty$. 
	
	Then $f \in C_a^\alpha([a,a+\varepsilon),\B)$  for all $\alpha \in (0,1)$ and $	  T^{\alpha}_a f(a) = 0$ .
\end{cor}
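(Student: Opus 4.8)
The plan is to reduce the statement to Corollary~\ref{c3.4} together with Definition~\ref{d2.4}. First I would fix $\alpha \in (0,1)$ and an arbitrary inner point $t \in (a, a+\varepsilon)$. Since $f$ has a first derivative at $t$ and $f \in L_1^{loc}([a,\infty),\B)$, Corollary~\ref{c3.4} applies and yields that the conformable derivative $T^{\alpha}_a f(t)$ exists with
\[
T^{\alpha}_a f(t) = (t-a)^{1-\alpha}\, f'(t).
\]
As $t$ was arbitrary in $(a,a+\varepsilon)$, this already shows $f \in C^{\alpha}_a((a,a+\varepsilon),\B)$, which is the first half of what has to be established and also puts us in the setting where Definition~\ref{d2.4} for the $\alpha$-derivative at the lower terminal makes sense.

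Next I would compute the limit $\lim_{t \to a+} T^{\alpha}_a f(t)$. By the hypothesis $\limsup_{t \to a+}\|f'(t)\|_\B < \infty$ (this is how the condition on the Banach-space valued function $f'$ is to be read), there exist a constant $M > 0$ and $\delta \in (0,\varepsilon)$ such that $\|f'(t)\|_\B \le M$ for all $t \in (a, a+\delta)$. Then for such $t$,
\[
\big\| T^{\alpha}_a f(t) \big\|_\B = (t-a)^{1-\alpha}\,\|f'(t)\|_\B \le M\,(t-a)^{1-\alpha},
\]
and since $1-\alpha > 0$ the right-hand side tends to $0$ as $t \to a+$. Hence $\lim_{t\to a+} T^{\alpha}_a f(t) = 0$ in $\B$, which by Definition~\ref{d2.4} means precisely that $T^{\alpha}_a f(a)$ exists and equals $0$; combined with the previous paragraph this gives $f \in C^{\alpha}_a([a,a+\varepsilon),\B)$ and $T^{\alpha}_a f(a) = 0$.

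I do not expect a genuine obstacle here: the argument is a direct application of Corollary~\ref{c3.4} and an elementary estimate. The only point requiring a word of care is the interpretation of the hypothesis $\limsup_{t\to a+} f'(t) < \infty$ for a function with values in $\B$, namely that it refers to the $\limsup$ of the norm $\|f'(t)\|_\B$; once this is fixed, the local boundedness of $f'$ near $a$ is exactly what forces $(t-a)^{1-\alpha} f'(t)$ to vanish in the limit. One could alternatively route the last step through Theorem~\ref{t4.3} (with the roles of $\alpha$ there played by $1$), but passing directly through Corollary~\ref{c3.4} keeps the proof shorter and self-contained.
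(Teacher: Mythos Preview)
Your argument is correct. The paper's own proof is the one-liner ``apply Theorem~\ref{t4.3} with $\alpha=1$'', which is the route you mention at the end and set aside; you instead invoke Corollary~\ref{c3.4} directly and then estimate $\|T^{\alpha}_a f(t)\|_\B \le M(t-a)^{1-\alpha}$. The underlying idea is the same---both rest on the identity $T^{\alpha}_a f(t)=(t-a)^{1-\alpha}f'(t)$ and the vanishing of $(t-a)^{1-\alpha}$ as $t\to a+$---so the difference is only in packaging. Your direct path has one small advantage: Theorem~\ref{t4.3} as stated needs $T^{1}_a f(a)=\lim_{t\to a+}f'(t)$ to exist, whereas the hypothesis here only gives a bounded $\limsup$ of the norm; your norm estimate uses exactly that boundedness and nothing more, so it matches the stated assumption more precisely.
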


\begin{proof}
	The statement follows immediately from Theorem \ref{t4.3}, applied for $\alpha = 1$.
\end{proof}

\begin{rem} \label{r4.5}
	It is well known that inspired from the fact that ${C^n(\Rset,\Rset) \subset C^k(\Rset,\Rset)}$ for integer ${n > k}$ 
	the same problem for all classical fractional derivatives is deeply studied (see \cite{SKM93}). 
	Then the question what is the relation between ${C^{\alpha}_a((a,\infty),\B)}$ and ${C^{\beta}_a((a,\infty),\B)}$, 
	when ${\alpha, \beta \in (0,1]}$ with ${\alpha \neq \beta}$ is more than reasonable. The answer is - Theorem \ref{t3.3} states that for all ${\alpha, \beta \in (0,1]}$ we have that \[C^{\alpha}_a((a,\infty),\B) = C^{\beta}_a((a,\infty),\B)
	\] for arbitrary Banach spaces.

\end{rem}

%
%

\medskip
\section{Conclusions}
\label{sec:Conclusions}

In this research
we investigate the conformable derivative behavior for functions with values in arbitrary Banach spaces
and clear the connection between two conformable derivatives of different order. 
As a consequence we obtain the important result that an abstract function has a conformable derivative at a point 
(which does not coincide with the lower terminal of the conformable derivative) 
if and only if it has a first order derivative at the same point.
Some considerations what happens in the lower terminal are given too.

%
%


\begin{thebibliography}{33}
	
	
\bibitem{A18}
A. Abdelhakim,  
Precise interpretation of the conformable fractional derivative,
arXiv:1805.02309[math.CA] (2018).
	
\bibitem{A19}
A. Abdelhakim,
The flaw in the conformable calculus: It is conformable because it is not fractional, Fractional Calculus and Applied Analysis 22(2) (2019), 242--254. 
	
\bibitem{AM19}
A. A. Abdelhakim, J. A. Tenreiro Machado, A critical analysis of the conformable derivative, Nonlinear Dynamics 95 (4) (2019), 3063--3073.
		
\bibitem{Abd15}
T. Abdeljawad, On conformable fractional calculus, J. Comput. Appl. Math. 279 (2015), 57--66. 
	
\bibitem{AAAJO18}
T. Abdeljawad, R. Agarwal, J. Alzabut, F. Jarad, A. Ozbekler, 	Lyapunov-type inequalities for mixed non-linear forced differential equations within conformable derivatives, J. Inequal. Appl. (2018), Paper No. 143, 17 pp. 
	
\bibitem{AHK15}
T. Abdeljawad, M. Al Horani, R. Khalil, Conformable fractional semigroups of operators, J. Semigroup Theory Appl. (2015), Art. ID 7, 1--9.
	
\bibitem{AAJ17}
T. Abdeljawad, J. Alzabut, F. Jarad, A generalized Lyapunov-type inequality in the frame of conformable derivatives, Adv. Differ. Equ. 321 (2017), 1--10.
	
\bibitem{ABE19}
F. M. Alharbi, D. Baleanu, A. Ebaid, Physical properties of the projectile motion using the conformable derivative, Chinese Journal of Physics 58 (2019), 18--28.
	
	\bibitem{AA17}


	M. Al-Rifae, T. Abdeljawad, Fundamental results of conformable Sturm--Liouville eigenvalue problems, Complexity (2017), Art. ID 3720471, 1--7.
	
	
	\bibitem{AU15}
	D. R. Anderson, D. J. Ulness, Newly defined conformable derivatives, Adv. Dyn. Syst. Appl. 10(2) (2015), 109--137. 
	
	\bibitem{BLNS15}
	H. Batarfi, J. Losada, J. J. Nieto, W. Shammakh, Three-point boundary value problems for conformable fractional differential equations, 
	J. Funct. Spaces (2015), vol. 2015, Article ID 706383.
	
	\bibitem{BHT16}
	N. Benkhettou, S. Hassani, D. F. M. Torres, A conformable fractional calculus on arbitrary time scales, 
	J. King Saud Univ. Sci. 28 (1) (2016), 93--98.
	
	
	\bibitem{Chu15}
	W. Chung, Fractional Newton mechanics with conformable fractional derivative, J. Comput. Appl. Math. 290 (2015), 150--158.
	
	
	\bibitem{ER15}
	M. Eslami, H. Rezazadeh, The first integral method for Wu--Zhang system with conformable time-fractional derivative, Calcolo 53(3) (2015), 475--485.

	\bibitem{GUC15}
	A. Gokdogan, E. Unal, E. Celik, Conformable fractional Bessel equation and Bessel functions, arXiv preprint arXiv:1506.07382, 2015 - arxiv.org.
	
	\bibitem{GUC16}
	A. Gokdogan, E. Unal, E. Celik, Existence and uniqueness theorems for sequential linear conformable fractional differential equations, 
	Miskolc Mathematical Notes 17(1) (2016), 267--279.
	
	\bibitem{HK14}
	M. A. Hammad, R. Khalil, Abel's formula and Wronskian for conformable fractional differential equations, 
	Int. J. Differ. Equ. Appl. 13 (2014), 177--183.
	
	\bibitem{IN16}
	O. S. Iyiola, E. R. Nwaeze, Some new results on the new conformable fractional calculus with application using D'Alambert approach, 
	Progr. Fract. Differ. Appl. 2 (2016), 1--7.
	
	\bibitem{JB19}
	A. Jaiswal, D. Bahuguna, 
	Semilinear conformable fractional differential equations in Banach spaces,
	Differ. Equ. Dyn. Syst. 27 (1-3) (2019), 313--325.
	
	
	\bibitem{j18}
	H. D. Junghenn, 
	Principles of Analysis, 
	CRS Press, Taylor \& Francis Group, Boca Raton, (2018).
	
	
	\bibitem{KHYS14}
	R. Khalil, M. Al Horani, A. Yousef, M. Sababheh, A new definition of fractional derivative, Comput. Appl. Math. 264 (2014), 65--70. 
	
	
	\bibitem{kpzarxiv19}
	H. Kiskinov, M. Petkova, A. Zahariev, 
	Remarks about the existence of conformable derivatives and some consequences, 
	(2019), http://arxiv.org/abs/1907.03486 [Math.CA].
	
	\bibitem{kpzaip19}
	H. Kiskinov, M. Petkova, A. Zahariev, 
	About the Cauchy problem for nonlinear system with conformable derivatives and variable delays, 
	AIP Conference Proceedings 2172, 050006, (2019). 
	
	
	\bibitem{LT17}
	M. J. Lazo, D. F. M. Torres, Variational calculus with conformable fractional derivatives, 
	IEEE/CAA Journal of automatica sinica 4 (2) (2017), 340--352.
	
	\bibitem{Mar18}
	A. A. Martynyuk, On the stability of the solutions of fractional-like equations of perturbed motion, 
	Dopov. Nats. Akad. Nauk Ukr. Mat. Prirodozn. Tekh. Nauki 6 (2018), 9--16. (in Russian)
	
	\bibitem{MS18}
	Electron. J. Differ. Equ. (2018), no. 62, 1--12.
	
	\bibitem{MSS19-1}
	A. A. Martynyuk, G. Stamov, I. Stamova, Integral estimates of the solutions of fractional-like equations of perturbed motion, 
	Nonlinear Analysis: Modelling and Control 24 (1) (2019), 138--149.
	
	\bibitem{MSS19-2}
	A. A. Martynyuk, G. Stamov, I. Stamova,
	Practical stability analysis with respect to manifolds and boundedness of differential equations with fractional-like derivatives, 
	Rocky Mountain J. Math. 49 (1) (2019), 211--233. 
	
	\bibitem{MWR19}
	Li Mengmeng, JinRong Wang, D. O'Regan, Existence and Ulam's stability for conformable fractional differential equations with constant coefficients, 
	Bull. Malays. Math. Sci. Soc. 42 (4) (2019), 1791--1812.
	
	\bibitem{MER20}
	V. Mohammadnezhad, M. Eslami, H. Rezazadeh,  
	Stability analysis of linear conformable fractional differential equations system with time delays, 
	Bol. Soc. Parana. Mat. 38 (3) (2019), no. 6, 159--171.
	
	\bibitem{OM15}
	M. D. Ortigueira, J. T. Machado, What is a fractional derivative?, J. Comput. Phys. 293 (2015), 4--13.
	
	\bibitem{OM17}
	M. Ortigueira, J. Machado, Which Derivative?, Fractal Fract 1(1) (2017). 
	
	
	\bibitem{PP16}
	M. Pospisil, L. S. Pospisilova, Sturm's theorems for conformable fractional differential equations, Math. Commun. 21(2) (2016), 273--281.	
	
	\bibitem{SKM93}
	S. G. Samko, A. A. Kilbas, O. I. Marichev, Fractional Integrals and Derivatives: Theory and Applications, 
	Gordon and Breach Science Publishers, Switzerland, (1993).
	
	\bibitem{Tar18}
	V. E. Tarasov, No Nonlocality. No Fractional Derivative, Communications in Nonlinear Science and Numerical Simulation 62 (2018), 157--163.
	
	\bibitem{TN16}
	J. Tariboon, S. K. Ntouyas, Oscillation of impulsive conformable fractional differential equations, Open Math. 14 (2016), 497--508.
	
	\bibitem{TNBO20}
	Nguyen Huy Tuan, Tran Bao Ngoc, Dumitru Baleanu, Donal O'Regan,
	On well-posedness of the sub-diffusion equation with conformable derivative model,
	Communications in Nonlinear Science and Numerical Simulation 89, October (2020), 105332.
	
	
	\bibitem{ZFW15}
	A. Zheng, Y. Feng, W. Wang, The Hyers-Ulam stability of the conformable fractional differential equation, 
	Math. Aeterna 5(3) (2015), 485--492.
	
	\bibitem{ZYZ18}
	H. W. Zhou, S. Yang, S. Q. Zhang, Conformable derivative approach to anomalous diffusion, Physica A: Statistical Mechanics and its Applications 491 (2018), 1001--1013.
	
	
\end{thebibliography}
\end{document}